\theoremstyle{definition}
 \newtheorem{theorem}{Theorem}
 \newtheorem{definition}[theorem]{Definition}
 \newtheorem{remark}[theorem]{Remark}
 \newtheorem{lemma}[theorem]{Lemma}
\newcommand\RR{\mathbb R}
\newcommand\ZZ{\mathbb Z}
\newcommand\G{\Gamma}
\newcommand\val{\mathrm{val}}
\begin{document}

\title{Sandpile group of infinite graphs}
\author{Nikita Kalinin, Vladislav Khramov}
\thanks{Corresponding author: Nikita Kalinin, nikita.kalinin@gtiit.edu.cn}
\address{Nikita Kalinin: Guangdong Technion Israel
Institute of Technology (GTIIT), Technion-Israel Institute of Technology, address: Guangdong Technion Israel Institute of Technology,
241 Daxue Road, Shantou, Guangdong Province 515603, P.R. China, nikita.kalinin@gtiit.edu.cn, Vladislav Khramov: Saint Petersburg State University, 7/9, Unversitetskaya emb., 199034, Saint Petersburg, Russia}
\begin{abstract}
For a finite connected graph $G$ and a non-empty subset $S$ of its vertices thought of sinks, the so-called critical group (or sandpile group) $C(G, S)$ has been studied for a long time. We present a class of graphs where such an extension can be made in a unified way. Similar extension was made by Maes, C. and Redig, F. and Saada, E., but we propose a more algebraic point of view.
  
Namely, consider a $C$-net $S\subset \mathbb Z^2$. We define a sandpile dynamics on $\mathbb Z^2$ with the set $S$ of sinks. For such a choice of sinks, a relaxation of any bounded state is well defined. This allows us to define a group $C(\mathbb Z^2, S)$ of recurrent states of this model. We show that $C(\mathbb Z^2, S)$ is isomorphic to a group of $S^1$-valued discrete harmonic functions on $\mathbb Z^2\setminus S$. 

Examples of $S$, for which $C(\mathbb Z^2, S)$ has no torsion or has all torsions, are provided. Pontryagin dual point of view is investigated. A discussion about perspectives of a sandpile group for $\mathbb Z^2$ as a projective limit concludes this work.

MCS codes: 05C63, 20E18, 05C10, 05C25
\end{abstract}

\maketitle

For a finite connected graph $G$, its critical group $K(G)$ is a finite abelian group whose order equals the number of the spanning trees of $G$. Moreover, there exists a (non-canonical) bijection \cite{Dhar} between the spanning trees of $G$ and the elements of $K(G)$; these elements can also be treated as recurrent states of the sandpile dynamics on $G$.  

It is a natural question of which connections survive when a graph grows and becomes infinite. For example, much is known about the set of uniform spanning trees (forests) in $\ZZ^2$ \cite{pemantle1991choosing,jarai2011abelian}. If the uniform spanning tree almost surely has one end, then a map from spanning trees to the recurrent sandpile states can be defined even in the infinite case \cite{gamlin2014anchored}. As we mentioned above, for a finite graph, the set of its spanning trees can be provided with the structure of an abelian group. How to keep it while passing to an infinite graph is unclear. Another motivation to study sandpiles on infinite graphs is that some of the sandpiles' properties can be formulated only via a limiting procedure, e.g., the famous self-organized criticality property states that a sandpile has a power law dependence between the size of an avalanche and its frequency \cite{bhupatiraju2016inequalities}.  

The first section reviews the key facts about a sandpile group for a finite graph. In the second section, we explain what should be changed to extend the definition of a sandpile group to infinite graphs. As we learned recently, our approach is quite parallel to that of Maes, C. and Redig, F. and Saada, E. \cite{MR2031036}. However, we hope that our presentation provides another point of view.  In the third section, we present proofs. The fourth section is dedicated to the simplest non-trivial example. In the fifth and sixth sections, we study the torsion of the sandpile group for infinite subgraphs of $\ZZ^2$ (our main object of study).  In the seventh section, we discuss the Pontyagin duality perspective. The last section contains a picture of the relaxation of a large pile of sand on a periodically punctured plane and a discussion about that elusive sandpile group on $\ZZ^2$. %The seventh section is for discussion. 

We exploit two key simple ideas: 1) if a set $S\subset\ZZ^2$ is a $C$-net, then there exists an explicit positive integer-valued function on $\ZZ^2$ which is strictly superharmonic on $\ZZ^2\setminus S$, which, in turn, guarantees that any bounded sandpile state can be relaxed, and so we can define a sandpile group $C(\ZZ^2, S)$ as in the finite case; 2) using the same function we show that the discrete Laplace operator on real-valued bounded function on $\ZZ^2\setminus S$ is invertible, which, by the snake lemma, provides an isomorphism between the group of $S^1$-valued harmonic functions on $\ZZ^2\setminus S$ and $C(\ZZ^2, S)$. This isomorphism simplifies studying the torsion of $C(\ZZ^2, S)$. The group of $S^1$-valued harmonic functions on $\ZZ^2$ (so-called ``harmonic model'') and its connection to sandpile has been extensively studied in \cite{schmidt2009abelian, shirai2016solvable}. We would like to thank Mikhail Shkolnikov, Evgeny Verbitskiy, Fedor Petrov for fruitful discussions.

\section{Sandpile group for finite graphs}
Consider a finite connected graph $G$ with a non-empty subset $S$ of its vertices. The following ``sandpile'' model was introduced in \cite{BTW} as an archetypical example of self-organized criticality. A {\it sandpile state} $\psi$ is a function $\psi:G\to \ZZ_{\geq 0}$ on the vertices of $G$; $\psi(z)$ represents the number of sand grains at $z$. If the number of grains at $z$ is at least its valency ($\psi(z)\geq \val(z)$) and $z\notin S$, then $\psi$ is called {\it unstable} at $z$ and one can perform a {\it toppling} at $z$ by sending $\val(z)$ grains from $z$ to its neighbors $z'\sim z$, one grain to each neighbor. Doing toppling while possible is called a {\it relaxation}. Any relaxation terminates (because topplings at $S$ are not allowed), and the final stable state is denoted by $\psi^{\circ}$; this state does not depend on a particular relaxation\cite{MR2246566,creutz1991abelian}.

For a given state $\psi$ a {\it toppling function} $F: G\to \ZZ_{\geq 0}$ is defined, $F(z)$ is equal to the number of topplings at a vertex $z$ during a relaxation. Naturally, $F_{|S}=0$. Note that $\psi^\circ=\psi+\Delta F$ where 

$$\Delta F(z)=\sum_{z'\sim z} F(z')-\val(z)\cdot F(z).$$

It is known \cite{FLP} that the toppling function of a state $\psi$ is the pointwise minimal non-negative function $F:G\to\ZZ_{\geq 0}$ satisfying $\psi(z)+\Delta F(z)\leq \val(z)$ pointwise (for all $z\in G\setminus S$) and $F_{|S}=0$.  The {\it Creutz identity} is defined as $\beta = \Delta {\bf 1}_S$ where ${\bf 1}_S$ is a characteristic function of $S$, \cite{creutz1991abelian}. Note that $\beta\geq 0$ on $G\setminus S$.

\begin{definition}
\label{def_0}
A sandpile state $\psi$ is called {\it recurrent} if $(\psi+\beta)^\circ = \psi$.
\end{definition}

It is known that the set of recurrent states for such a sandpile on $(G, S)$ is a finite group with respect to pointwise addition followed by a relaxation \cite{dhar1995algebraic}. This group, denoted by $C(G, S)$, is called the sandpile (or critical) group of $(G, S)$. Note that $\Delta$ is a homomorphism $\ZZ^G\to \ZZ^G$. Write  $\Gamma= G\setminus S$. Abusing notation we write $\Delta:\ZZ^\Gamma\to \ZZ^\Gamma$ using $\phi\in\ZZ^\Gamma\subset \ZZ^G$ (in the natiral way), $\Delta(\phi) = \Delta(\phi)|_{\Gamma}$.  It is known that $$C(G,S) = \ZZ^{\Gamma}/{\Delta \ZZ^{\Gamma}}.$$

In other words,  for $\psi,\phi\in \ZZ^\Gamma$ we say that $\psi$ and $\psi$ are equivalent, $\psi\sim \phi$, if there exists $F\in \ZZ^G$ such that $\phi=\psi+(\Delta F)_{|\Gamma}$. Then in each equivalence class $\ZZ^{\Gamma}/\sim$ there exists exactly one recurrent state, and the group structure on $C(G, S)$ coincides with that of  $\ZZ^{\Gamma}/\sim$,  see \cite{MR2246566} for further details.

\section{Sandpile group for infinite graphs}

In this section, we present definitions, statements, and explanations; proofs will follow in the next section.

{\bf Sandpile viewpoint.} Sandpile states can be defined for infinite graphs as well, but the relaxation issue is more subtle and usually is treated from the probabilistic perspective \cite{fey2009stabilizability, MR2031036, MR3350375,jarai2018sandpile}. However, the group structure can be defined for some infinite graphs \cite{heizmann2023sandpiles,kaiser2022abelian}. Let $G$ be any connected graph and $S$ be a subset of the vertices of $G$. A state $\phi\in \ZZ_{\geq 0}^G$ is called {\it relaxable} (stabilizable following \cite{fey2009stabilizability}) if there exists a bounded function $F\in \ZZ_{\geq 0}^{\Gamma}$ such that $\phi+\Delta F\leq \val$ pointwise. In this case, the toppling function is defined as the pointwise minimal function $F:\Gamma\to \ZZ_{\geq 0}$ such that $\psi+\Delta F\leq \val$ pointwise. We will define a family of graphs where the pointwise sum of two stable states is necessarily relaxable (which is not true for an arbitrary graph). 

{\bf Motivation: elusive group structure on recurrent sandpile states on $\ZZ^2$.} One can define a recurrent state on $\ZZ^2$ as a state whose restriction to each finite subset $G$ of $\ZZ^2$ (and letting $\ZZ^2\setminus G$ to be sinks) is a recurrent state. Unfortunately, the set of such states on $\ZZ^2$ does not form a group under pointwise addition and subsequent relaxation. On a finite graph, the recurrent states bijectively correspond to $S^1$-valued discrete harmonic functions on that graph (see below). Moreover, the set of $S^1$-valued discrete harmonic functions on $\ZZ^2$ is a group, but the relation between this group and the set of the recurrent sandpile states is unclear for infinite graphs. Meanwhile, there is a map (which is not related to the bijection in a finite case), preserving entropy, between these two objects \cite{schmidt2009abelian}, it is conjectured that this map is one-to-one almost surely. 

We aim to approach recurrent states on $\ZZ^2$ via recurrent states on infinite, slightly dissipative sandpiles. In this article, we define these slightly dissipative models with the hope that they will be helpful in the study of recurrent states on $\ZZ^2$. It would be a pity to lose the abelian group structure; we would like to know what can be saved.

Let $S\subset\ZZ^2$ and $\G=\ZZ^2\setminus S$. Let $C>0$. We call  $S$ a $C$-net if for each point $z\in\ZZ^2$ the edge-length of the shortest path from $z$ to $S$ is at most $C$.  We will consider subgraphs of $\ZZ^2$, but all our results can be easily extended for any other graph $G$ with a bounded valency and a $C$-net set $S$ of sinks.

Denote by $\ZZ^\G_B$ the abelian group of {\bf bounded} functions $\phi:G\to \ZZ$ such that $\phi(s)=0$ for all $s\in S$. Note that unlike the group of all integer-valued functions on $\ZZ^2$, which is not free,  $\ZZ^{\ZZ^2}_B$ is a free Abelian group \cite{nobeling1968verallgemeinerung}.

\begin{lemma}
\label{lemma_1}
If $S$ is a $C$-net, then any state $\phi\in \ZZ^\G_B$ is relaxable.
\end{lemma}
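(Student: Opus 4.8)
The plan is to prove relaxability by exhibiting a single explicit bounded function $F$ that stabilises $\phi$, rather than by analysing any relaxation dynamics. By the definition of relaxability it suffices to produce one bounded $F\in\ZZ_{\geq0}^{\G}$ with $\phi+\Delta F\leq \val$ pointwise on $\G$; the minimality characterisation of the toppling function then supplies the actual toppling function automatically, bounded above by this $F$.

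The key ingredient is a bounded, nonnegative, strictly superharmonic potential vanishing on the sinks. First I would construct a function $H:\ZZ^2\to\RR_{\geq 0}$ with $H|_S=0$ and $\Delta H\leq -1$ on $\G$ (on $\ZZ^2$ one has $\val\equiv 4$, and one can in fact arrange $\Delta H=-\val$). The natural choice is the expected hitting time of $S$ for the simple random walk, $H(z)=\mathbb E_z[\tau_S]$, which solves $\Delta H=-\val$ on $\G$ and vanishes on $S$. The role of the $C$-net hypothesis is precisely to make $H$ bounded: from any $z$ there is a shortest (hence self-avoiding) path to $S$ of length at most $C$, which the walk follows with probability at least $4^{-C}$, so $H$ is bounded by a constant depending only on $C$. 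I would stress that a naive distance-based candidate such as $f(\mathrm{dist}(z,S))$ fails here — at a vertex three of whose neighbours are farther from $S$ the Laplacian becomes positive — which is why the random-walk (equivalently, Green-function) construction is needed.

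Given such an $H$, let $B=\sup_{\G}\phi<\infty$ and choose a positive integer $\lambda\geq B-\val$. Setting $F=\lambda H$ one obtains, on $\G$,
$$\phi+\Delta F=\phi+\lambda\,\Delta H\leq B-\lambda\leq \val,$$
while $F\geq 0$, $F|_S=0$, and $F$ is bounded because $H$ is. Hence $\phi$ is relaxable. Since the argument uses only the upper bound $\phi\leq B$, it applies verbatim to signed $\phi\in\ZZ^{\G}_B$, not just to nonnegative states.

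The one technical point, and the main thing to get right, is integrality: the definition asks for $F\in\ZZ_{\geq 0}^{\G}$, whereas $\mathbb E_z[\tau_S]$ is real-valued. I would resolve this either by replacing $\lambda H$ by its floor $F=\lfloor\lambda H\rfloor$ (the fractional parts perturb $\Delta F$ by a bounded amount, which a slightly larger choice of $\lambda$ absorbs) or by exhibiting directly an integer-valued bounded superharmonic $H$ as promised in the introduction. I expect the quantitative boundedness estimate for $H$ — the place where the $C$-net condition is actually consumed — to be the substantive step, with the scaling verification above being routine.
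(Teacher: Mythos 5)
Your proof is correct and rests on the same mechanism as the paper's: produce a bounded, nonnegative, strictly superharmonic potential vanishing on $S$, scale it by a constant proportional to $\sup\phi$, and use it as the stabilising $F$; in both arguments the $C$-net hypothesis is consumed exactly once, to make the potential bounded. The difference is the construction. The paper uses the explicit integer-valued, distance-based function $h(z)=\sum_{k=1}^{\mathrm{dist}(z,S)}4^{C-k}$, whose increments decay geometrically with ratio $1/4$: since $3\cdot 4^{-1}<1$, even the worst configuration of one neighbour closer to $S$ and three farther yields $\Delta h(z)\leq -4^{C-\mathrm{dist}(z,S)-1}$, while at distance exactly $C$ no neighbour can be farther (again by the net condition), so $\Delta h\leq -1$ on $\G$ and $F=Mh$ with $|\phi|\leq M$ finishes in one line, with no integrality repair needed. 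Your hitting-time potential $H(z)=\mathbb{E}_z[\tau_S]$ solves $\Delta H=-4$ exactly, your tail bound $\mathbb{P}_z(\tau_S>kC)\leq(1-4^{-C})^k$ giving $H\leq C\cdot 4^C$ is sound, and your floor repair works: writing $\lfloor\lambda H\rfloor=\lambda H-r$ with $0\leq r<1$ gives $\Delta\lfloor\lambda H\rfloor\leq\lambda\Delta H+4$, which a larger $\lambda$ absorbs. What each buys: your route generalizes verbatim to any bounded-valency graph with a $C$-net of sinks (which the paper also claims, and your constants make explicit), while the paper's is self-contained, purely combinatorial, and avoids both probability and the rounding step.

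One aside in your write-up is wrong, and instructively so: you assert that a distance-based candidate $f(\mathrm{dist}(z,S))$ must fail because at a vertex with three neighbours farther from $S$ the Laplacian turns positive, and that this forces the random-walk construction. That objection is valid for linear or slowly-varying $f$, but not when the increments of $f$ decay geometrically with ratio below $1/3$ --- and the paper's $h$ is precisely such a distance-based function. So the Green-function construction is a legitimate alternative, not a necessity; had you tested the geometric-increment candidate, you would have recovered the paper's proof and could have skipped the integrality discussion entirely.
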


The Creutz identity is defined as above: $\beta = \Delta {\bf 1}_S$. Note that $\beta = \Delta({- \bf 1}_{\ZZ^2\setminus S})$.

\begin{definition}
A non-negative stable state $\phi\in \ZZ^\G_B$ is called {\it recurrent} if $(\phi+\beta)^\circ = \phi$.
\end{definition}

Then, it is not difficult to show that the set of all recurrent states in $\ZZ^2$ with a $C$-net sink set $S$ forms a group, which we denote by $C(\ZZ^2, S)$. For example, the neutral element in this group is computed as $(n\beta)^\circ$ for $n$ big enough (depending on $C$), and an explicit bound for $n$ can be given in terms of $C$. The inverse element for $\phi$ can be computed as $(-\phi+n\beta)^\circ$ for $n$ big enough. However, we provide another description of $C(\ZZ^2, S)$ to understand its finite subgroups better.

{\bf Algebraic viewpoint.} Our approach follows that of \cite{dhar1995algebraic, solomyak1998coincidence, schmidt2009abelian, lang2019sandpile}. Define the Laplace operator $\Delta: \ZZ^\G_B\to \ZZ^\G_B$ as follows. For $\phi\in \ZZ^\G_B$  and $z\in \G$ let
$$\Delta \phi(z)=\sum_{z'\sim z} \phi(z')-4\phi(z).$$  At $z\in S$ let $(\Delta \phi)(z)=0$ by definition. 

\begin{definition}
\label{def_1}
The {\it sandpile group} of $(\ZZ^2,S)$ is $$C(\ZZ^2,S)=\ZZ^\G_B/{\Delta \ZZ^\G_B}.$$
\end{definition}

Let $(S^1)^\G$ be the set of $S^1$-valued functions on $\G$ (as above we extend such functions to $S$ by zero). Note that the following sequence is exact: $$0\to \ZZ^\G_B\to \RR^\G_B \to (S^1)^\G\to 0.$$

The definition of the Laplacian operator can be naturally extended to the cases $\Delta: \RR^\G_B\to \RR^\G_B$ and $\Delta: (S^1)^\G\to (S^1)^\G$.
Denote by $\mathcal H_{S^1}(\G)$  the set of $S^1$-valued harmonic functions on $\G$: 

$$\mathcal H_{S^1}(\G) = \mathrm{ker} (\Delta: (S^1)^\G\to (S^1)^\G).$$ Note that such a harmonic function $\alpha\in \mathcal H_{S^1}(\G)$ is extended to $S$ by zero (so that we can compute $\Delta \alpha$ at neighbors of $S$), but for $z\in S$ it is not necessarily true that $\sum_{z'\sim z} \alpha(z')-4\alpha(z)=0$.

\begin{definition}
\label{def_group}
Let us draw the following diagram:

\begin{figure}[h]
\begin{tikzcd}
            & 0 \arrow[r] & \ZZ^\G_B \arrow[r, "\Delta"] \arrow[d] & \ZZ^\G_B \arrow[r] \arrow[d] & C(\ZZ^2,S) \arrow[r] & 0 \\
            & 0 \arrow[r] & \RR^\G_B \arrow[d] \arrow[r,"\Delta"]      & \RR^\G_B \arrow[r] \arrow[d] & 0           &   \\
0 \arrow[r] & \mathcal H_{S^1}(\G) \arrow[r] & (S^1)^\G \arrow[r, "\Delta"]                & (S^1)^\G \arrow[r]           & 0           &  
\end{tikzcd}
\label{figd}
\caption{Relation between the sandpile group and the $S^1$-valued harmonic functions on $\G$.}
\end{figure}

The first row defines the sandpile group $C(\ZZ^2,S)$, and the third row defines $\mathcal H_{S^1}(\G)$.
\end{definition}

\begin{theorem}
\label{thm_1}
If $S\subset\ZZ^2$ is a $C$-net, then the above diagram is commutative, and columns (extended by zeros on both sides) and rows are exact. In particular, $C(\ZZ^2,S) $ is isomorphic to $\mathcal H_{S^1}(\G)$. Moreover, classes of $C(\ZZ^2, S)$ (Definition~\ref{def_1})  bijectively correspond to  {\it recurrent} states (Definition~\ref{def_0}), i.e., the set of recurrent states of the sandpile on $(\ZZ^2, S)$ forms an abelian group.
\end{theorem}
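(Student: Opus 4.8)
The plan is to reduce everything to a single analytic fact: that the Laplacian $\Delta\colon\RR^\G_B\to\RR^\G_B$ is an isomorphism. Granting this, the middle row of the diagram is exact, and the remaining exactness together with the isomorphism $C(\ZZ^2,S)\cong\mathcal H_{S^1}(\G)$ follows formally. Commutativity of every square is immediate, since the vertical maps are the inclusion $\ZZ^\G_B\hookrightarrow\RR^\G_B$ and the reduction $\RR^\G_B\to(S^1)^\G$, both of which commute with $\Delta$ because $\Delta$ is given by the same integer-coefficient formula at each level; and the two genuine columns are precisely the short exact sequence $0\to\ZZ^\G_B\to\RR^\G_B\to(S^1)^\G\to 0$ recalled before the diagram. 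Thus $\Delta$ defines a morphism from this short exact sequence to itself, to which I would apply the snake lemma.

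The heart of the argument, and the step I expect to be the main obstacle, is the invertibility of $\Delta$ on bounded real functions; here the $C$-net hypothesis enters through a bounded, strictly superharmonic barrier. Concretely, the $C$-net property furnishes a bounded function $w\colon\ZZ^2\to\RR$ with $w|_S=0$, $w\ge 1$ on $\G$ and $\Delta w\le -1$ on $\G$ (the positive strictly superharmonic function produced in the proof of Lemma~\ref{lemma_1}, which for a $C$-net can be taken bounded, e.g. the expected number of steps for the simple random walk to reach $S$, bounded by a constant depending only on $C$). For injectivity, suppose $\Delta f=0$ with $f$ bounded and $f|_S=0$. For each $\e>0$ the function $f-\e w$ is bounded, vanishes on $S$, and is strictly subharmonic on $\G$; a short computation with the defining averaging identity shows that along any sequence approaching $\sup_\G(f-\e w)$ the value must drop by at least $\e$, which is impossible once that supremum is positive. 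Hence $\sup_\G(f-\e w)\le 0$, and letting $\e\to 0$ gives $f\le 0$; applying the same to $-f$ yields $f\equiv 0$. For surjectivity, given bounded $\rho\in\RR^\G_B$ I would solve the Dirichlet problems $\Delta f_R=\rho$ on the finite truncations $\G\cap B_R$ (where the Laplacian with absorbing set $S$ is invertible), obtain the uniform a priori bound $\|f_R\|_\infty\le\|\rho\|_\infty\,\|w\|_\infty$ by comparing $f_R$ with $\pm\|\rho\|_\infty\,w$ through the maximum principle, and extract a pointwise limit $f$ (Tychonoff/diagonal) that is bounded and satisfies $\Delta f=\rho$ on all of $\G$.

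With $\Delta\colon\RR^\G_B\to\RR^\G_B$ an isomorphism, injectivity restricts to $\ZZ^\G_B$, so the top row is exact with cokernel $C(\ZZ^2,S)$ by Definition~\ref{def_1}; surjectivity pushes forward along $\RR^\G_B\to(S^1)^\G$ to give surjectivity of $\Delta$ on $(S^1)^\G$, so the bottom row is exact with kernel $\mathcal H_{S^1}(\G)$ by definition. The snake lemma applied to the self-morphism $\Delta$ of the column short exact sequence then yields the six-term sequence $0\to\ker\Delta_{\ZZ}\to\ker\Delta_{\RR}\to\ker\Delta_{S^1}\to\mathrm{coker}\,\Delta_{\ZZ}\to\mathrm{coker}\,\Delta_{\RR}\to\mathrm{coker}\,\Delta_{S^1}\to 0$; since $\ker\Delta_{\RR}=\mathrm{coker}\,\Delta_{\RR}=0$, this collapses to the desired isomorphism $\mathcal H_{S^1}(\G)\cong C(\ZZ^2,S)$, and the surrounding zeros give the asserted exactness of the borderline columns.

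It remains to identify the classes of $C(\ZZ^2,S)=\ZZ^\G_B/\Delta\ZZ^\G_B$ with the recurrent states of Definition~\ref{def_0}, and here I would transplant the finite-graph theory, with Lemma~\ref{lemma_1} supplying the missing finiteness. First note $\beta=\Delta(-{\bf 1}_{\ZZ^2\setminus S})$ with $-{\bf 1}_{\ZZ^2\setminus S}\in\ZZ^\G_B$, so both relaxation $\phi\mapsto\phi^\circ$ and addition of $\beta$ preserve the class of $\phi$. For existence of a recurrent representative in each class I would show, as in the finite case, that $(\phi+n\beta)^\circ$ is recurrent for $n$ large (the bound on $n$ coming from $C$), which lies in the class of $\phi$ by the previous remark; for uniqueness I would adapt Dhar's burning/least-action argument, using the barrier $w$ and boundedness to keep all toppling functions finite and the comparison steps valid on the infinite graph. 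Once each class contains exactly one recurrent state, transporting the group law of $C(\ZZ^2,S)$ through this bijection shows that pointwise addition followed by relaxation makes the recurrent states an abelian group isomorphic to $C(\ZZ^2,S)$. The delicate point throughout this last part is the uniqueness of the recurrent representative; modulo that, the identification is formal.
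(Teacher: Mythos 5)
Your skeleton is the paper's: commutativity of the diagram, the column exact sequence $0\to \ZZ^\G_B\to\RR^\G_B\to(S^1)^\G\to 0$, the snake lemma, and the reduction of everything to $\mathrm{ker}(\Delta:\RR^\G_B\to\RR^\G_B)=0$ and $\mathrm{coker}(\Delta:\RR^\G_B\to\RR^\G_B)=0$, with the $C$-net hypothesis entering through a bounded strictly superharmonic barrier (the paper's explicit $h(z)=\sum_{k=1}^{\mathrm{dist}(z,S)}4^{C-k}$ plays the role of your $w$; your hitting-time barrier works equally well and is bounded for the same reason). Your two analytic sub-arguments are correct variants of the paper's. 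For injectivity, the paper propagates ``value within $\delta$ of $\sup|\phi|$'' to all four neighbors (each step multiplies $\delta$ by at most $4$) and reaches $S$, where $\phi=0$, in at most $C$ steps; you instead compare with $\varepsilon w$ and use strict subharmonicity to force a neighbor above the supremum. Same mechanism, different packaging (one small slip: near the supremum the neighbor value must \emph{rise} by at least $\varepsilon-4\delta$, not drop, but the contradiction is the same). For surjectivity, the paper runs the monotone iteration $\phi_{n+1}(z)=\frac14\bigl(-\psi(z)+\sum_{z'\sim z}\phi_n(z')\bigr)$ downward from the supersolution $Lh$, while you solve Dirichlet problems on exhaustions with the a priori bound $\|f_R\|_\infty\le\|\rho\|_\infty\|w\|_\infty$ and take a diagonal limit; both are standard and valid, yours trading the monotonicity check for locality of $\Delta$ under pointwise limits.

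The genuine gap is the one you flag yourself: uniqueness of the recurrent representative in each class of $\ZZ^\G_B/\Delta\ZZ^\G_B$. Saying you ``would adapt Dhar's burning/least-action argument'' is not yet a proof, and the entire content of the adaptation to the infinite graph is one concrete step, which the paper supplies: if $\phi=\psi+\Delta F$ with $\phi,\psi$ recurrent and $F\in\ZZ^\G_B$, then $F$ is bounded and integer-valued, so $\max F$ is \emph{attained}; on $R=\{z: F(z)=\max F\}$ one has $\Delta F(z)\le -\#\{z'\sim z:\ z'\notin R\}$, so stability of $\psi$ makes $R$ a forbidden subconfiguration of $\phi$, and in the relaxation of $\phi+\beta$ no vertex of $R$ ever topples --- contradicting recurrence of $\phi$, since $(\phi+\beta)^\circ=\phi$ forces one toppling at every vertex of $\G$ (this uses $\phi+\beta+\Delta{\bf 1}_\G=\phi$, i.e. $\beta=\Delta(-{\bf 1}_\G)$). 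Note that $R$ may be infinite here, which is harmless for this argument but is exactly why a vague appeal to the finite-graph burning test does not transplant automatically. For existence, your claim that $(\phi+n\beta)^\circ$ is recurrent for large $n$ should likewise be routed through the paper's equivalent characterization (recurrent iff $\phi=\psi^\circ$ for some $\psi\ge 3$): one partially relaxes $\phi+2M\cdot 4^{C+1}\beta$ to a state that is at least $3$ everywhere, which is possible precisely because $S$ is a $C$-net. With those two steps made explicit, your proof matches the paper's.
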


\section{Proofs}
 
Define the following function $h\in \ZZ^\G_B$: 
\begin{equation}
\label{eq_1}
h(z) = \sum_{k=1}^{\mathrm{dist}(z,S)} 4^{C-k}.
\end{equation}

Note that $\Delta h(z)\leq 4^{C-\mathrm{dist}(z,S)}(-4+3(4^{-1}+1))\leq -1$ if $\mathrm{dist}(z,S)\geq 1$. If $\mathrm{dist}(z,S)=1$ then $\Delta\phi(z)\leq -4\cdot 4^C+3(4^C+4^{C-1})=-4^{C-1}\cdot 3\leq -1$. Note that $h\geq 0$ pointwise.

\begin{proof}[Proof of Lemma~\ref{lemma_1}] Consider a state $\phi\in\ZZ^\G_B$. Let $|\phi|\leq M$. Then $\phi+\Delta (M h)\leq \val$ pointwise and thus $\phi$ is relaxable.
\end{proof}
 
 Note that if  a stable state $\phi\geq 3$ everywhere then $\phi^\circ = (\phi^\circ+\beta)^\circ$. Indeed while relaxing $\phi+\beta$, we must do at least one toppling at every point of $\G$, and performing one toppling at every vertex we obtain  $$\phi+\beta+\Delta ({\bf 1}_\G) = \phi+\Delta (-{\bf 1}_\G)+\Delta ({\bf 1}_\G)=\phi.$$
 
Thus, it gives the following equivalent definition of a recurrent state.
\begin{definition}
A state $\phi\in\ZZ^\G$ is called recurrent if there exists $\psi\geq 3$ such that $\phi=\psi^\circ$.
\end{definition}

Note that $[\beta]=[0]$ in $C(\ZZ^2,S)$, see Definition~\ref{def_1}. Consider a state $\phi\in\ZZ^\G_B$. Let $|\phi|\leq M$. Then we can partially relax the state $\phi+ 2M\cdot 4^{C+1}\beta$ such that this partial relaxation is $\geq 3$ everywhere. So, for each $\phi\in\ZZ^\G_B$, an equivalent state exists, which is at least three everywhere. A relaxation of a state also does not change its equivalence class in $C(\ZZ^2, S)$. Thus we see that at least one recurrent state exists in each equivalence class of $\ZZ^\G_B/{\Delta \ZZ^\G_B}$. If two recurrent states $\phi,\psi$ belong to one equivalence class, then $\phi=\psi+\Delta F$ where $F$ is bounded. Then, as in the finite case, we look at the set $R=\{z| F(z) = \max F\}$ and see that while relaxing $\phi+\beta$ there will be no topplings on $R$ (in a finite case such $R$ is called a forbidden subconfiguration) which contradicts the definition of a recurrent state $\phi$.
 
\begin{proof}[Proof of Theorem~\ref{thm_1}] To prove this theorem by application of the snake lemma it is enough to show that a)$\mathrm{ker}(\Delta:\RR^\G_B\to \RR^\G_B)=0$ and b)$\mathrm{coker}(\Delta:\RR^\G_B\to \RR^\G_B)=0$.

Consider $\phi\in \RR^\G_B$ such that $\Delta\phi=0$. Note that $\Delta\phi=0$ in $\RR^\G_B$ does not imply that $\Delta\phi=0$ at all points since we have no restrictions for the values of $\Delta\phi$ on $S$. However, we know that $\phi$ is bounded. Take a point $z\in\ZZ^2$ where $\phi(z)$ is close to the $M=\mathrm{sup}\ |\phi|$. Then $\phi(z')$ is also close to $M$ for {\bf all} neighbors $z'$ of $z$. Note, however, that this is impossible for a neighbor of $S$ since $\phi|_{S} = 0$. But after no more than $C$ steps, we can come to $S$ from $z$, and thus, by choosing $\phi(z)$ close enough to $M$, we arrive at a contradiction, thus proving a).

To prove b), consider a function $\psi\in \RR^\G_B, |\psi(z)|\leq L, \forall z\in \G$. We want to show that there exists $\phi\in \RR^\G_B$ such that $\Delta \phi=\psi$. Since $\Delta$ is linear, then without loss of generality, we assume that $\psi\geq 0$.

Take $h$ from \eqref{eq_1}, then $Lh\geq 0$ satisfies $\Delta(Lh)\leq -L$. We will show that there exists $\phi, |\phi|\leq Lh$ such that $\Delta \phi=\psi$.
Construct such a function as the limit of the following sequence. Let $\phi_0=Lh$. Take all points $z\in\G$ where $\Delta \phi_0(z)\leq \psi(z)$ and diminish $\phi_0$ at these points, obtaining a function $\phi_1$, to have $\sum_{z'\sim z}\phi_0(z')-4\phi_1(z) = \psi(z)$. Then repeat and construct $$\phi_{n+1}(z) = \frac{1}{4}(-\psi(z)+\sum_{z'\sim z}\phi_n(z)).$$ 
Note that the condition $\Delta\phi_n\leq \psi$ is preserved, so  $\phi_{n+1}\leq \phi_n$. Also, the condition $\phi_n \geq -Lh$ is preserved. Hence, there exists a well-defined pointwise limit (pointwise infimum of $\phi_{n\to \infty}$) $\phi_{\infty}=\lim \phi_n$, satisfying $\Delta \phi_{\infty}=\psi$. 

\end{proof}

\begin{remark}
If $\G$ is not empty (i.e. $S\ne \ZZ^2$) then the sandpile group $C(\ZZ^2,S)$ is not trivial.
\end{remark}
One way to see that is to notice that there exists a (non-canonical) bijection between the elements of $C(\ZZ^2, S)$ and the set of spanning trees of $\ZZ^2$ with $S$ contracted into one point. One needs to carry the Dhar burning algorithm \cite{Dhar} exactly as in the case of finite graphs. Another way to see that  $C(\ZZ^2,S)$ is not trivial is to show that the state $\delta_v$ ($\delta_(v)=1$ and $0$ otherwise) is not in the image of $\Delta$ in Definition~\ref{def_group}.

In \cite{MR2031036} a sandpile model is called dissipative (Definition 2.2) is the supremum by $y$ of $\sum_{x}G(x,y)$ (the summation runs by all vertices of the graph, and $G(x,y)$ is the Green function) is finite. For the dissipative model, the sandpile group is defined, and its probabilistic properties are studied.

\section{Example: a ``ray'' graph}

Consider the following ``ray'' graph. Let $\Gamma=\{(i,0)\in\ZZ^2, i\geq 1\}$, and $S=\ZZ^2\setminus \Gamma$. This model is dissipative, a toppling at each $z\in\G$ results in losing two grains from the system (three when $z=(1,0)$).

In this case $C(\ZZ^2,S) = S^1$. Indeed, since $C(\ZZ^2,S) = \mathcal H_{S^1}(\G)$, we are looking for harmonic functions $a:\Gamma\to S^1$. Denote $a_i:=a(i,0)$ and let $a_0=0$. In this notation we are looking for harmonic sequences $0=a_0, a_1,a_2,a_3\dots \in S^1$, i.e. for all $n\geq 1$ 
\begin{equation}
\label{eq_2}
4a_{n}=a_{n-1}+a_{n+1} \pmod 1
\end{equation} since each vertex of $\G$ has four neighbors, and $a=0$ at two of them. So a choice of $a_1$ determines the sequence: $a_2 = 4a_1 \pmod 1, a_3=4a_2-a_1 = 15a_1 \pmod 1, a_4 = 4a_3-a_2 = 56a_1 \pmod 1$ etc. 

Hence, we conclude that $$a_n = c((2+\sqrt 3)^n-(2-\sqrt 3)^n) \pmod 1$$ for a certain constant $c\in [0,1)$. To construct the corresponding element in $C(\ZZ^2, S)$ we need to compute a sandpile state $\phi$ as the Laplacian of $a_n$ (treated as being in $\RR$, not in $S^1$), and then take the recurrent state $\gamma$ equivalent to $\phi$. 

So, let $a_i\in [0,1)\subset \RR$ and 
\begin{equation}
\label{eq_3}
\phi(n) = a_{n-1}+a_{n+1}-4a_n\in\RR.
\end{equation} Note that $\phi(n)\in\ZZ$ since $a_n$ is harmonic. However, this state $\phi$ may be not recurrent, indeed 
$$-4<a_{n-1}+a_{n+1}-4a_n<2, \text{\ hence \ } -3\leq \phi(n)\leq 1.$$ Note that the state $2+\delta_1$ is the neutral element of the sandpile group of $\G$.

$$e =  3 2 2 2 2 2 2 2 \dots = 2+\delta_1.$$

Indeed, $2+\delta_1$ is recurrent; it is the Creutz identity. Also one can check explicitly that $$(2\cdot(2+\delta_1))^\circ=2\cdot(2+\delta_1)+\Delta 1= 2+\delta_1.$$

Define a state $\gamma(n) = 3(2 + \delta_1) + \phi(n)$, we have $\gamma(n)\geq 3$, and then $\gamma^\circ$ represents the element of the sandpile group $C(\ZZ^2,S)$ corresponding to the element $$a_1=c \cdot 2\sqrt 3 \pmod1 \in S^1=\mathcal H_{S^1}(\G).$$

\begin{lemma}
There is a bijection between elements $g\in C(\ZZ^2,S)$ of order $k>1$ and harmonic functions $\phi\in  \ZZ_k^\G, \Delta \phi=0$.
\end{lemma}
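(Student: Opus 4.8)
The plan is to reduce everything to Theorem~\ref{thm_1}, which provides a group isomorphism $C(\ZZ^2,S)\cong\mathcal H_{S^1}(\G)$. Because an isomorphism of abelian groups sends an element of order $k$ to an element of order $k$, it suffices to understand the elements of order $k$ inside $\mathcal H_{S^1}(\G)$, that is, the $S^1$-valued harmonic functions $\alpha$ on $\G$ with $k\alpha=0$ (and no smaller multiple vanishing). First I would record the elementary fact that the $k$-torsion subgroup of $S^1=\RR/\ZZ$ is $\frac1k\ZZ/\ZZ$, and that $m/k\mapsto m$ is an isomorphism $\frac1k\ZZ/\ZZ\cong\ZZ/k\ZZ=\ZZ_k$.

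Next I would build the candidate bijection. If $\alpha\in\mathcal H_{S^1}(\G)$ satisfies $k\alpha=0$, then $\alpha$ takes values in $\frac1k\ZZ/\ZZ$, so $\phi:=k\alpha$ defines a function $\phi\colon\G\to\ZZ_k$ (extended, like $\alpha$, by zero on $S$); conversely every $\phi\in\ZZ_k^\G$ arises from a unique such $\alpha$. The crucial point is that harmonicity matches up, and this is where the integrality of $\Delta$ is used: lifting each $\phi(z)$ to an integer representative $m_z$, we have $\alpha(z)=m_z/k\pmod 1$ and hence, for every $z\in\G$, $\Delta\alpha(z)=\frac1k\big(\sum_{z'\sim z}m_{z'}-4m_z\big)$ in $\RR/\ZZ$. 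This vanishes in $S^1$ if and only if $\sum_{z'\sim z}m_{z'}-4m_z\equiv 0\pmod k$, i.e. $\Delta\phi=0$ in $\ZZ_k^\G$; note that this is independent of the chosen representatives. No boundedness hypothesis intervenes, since $S^1$- and $\ZZ_k$-valued functions are automatically bounded. This already yields a bijection between the $k$-torsion subgroup of $\mathcal H_{S^1}(\G)$ and $\{\phi\in\ZZ_k^\G:\Delta\phi=0\}$.

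The step I expect to require the most care is bookkeeping the order precisely, since a $k$-torsion element can have order a proper divisor of $k$. Under $\alpha=\frac1k\phi$ the order of $\alpha$ in $\mathcal H_{S^1}(\G)$ equals the order of $\phi$ in the product group $\ZZ_k^\G$, which, computing valuation by valuation, is $k/\gcd\big(k,\gcd_{z}\phi(z)\big)$. Thus $\alpha$ has order exactly $k$ if and only if $\phi$ is \emph{primitive} modulo $k$, meaning the values $\{\phi(z)\}$ together with $k$ have greatest common divisor $1$, equivalently the image of $\phi$ generates $\ZZ_k$ (it is contained in no proper subgroup $d\,\ZZ_k$ with $d\mid k$, $d>1$). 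I would therefore phrase the conclusion as: elements of order exactly $k$ in $C(\ZZ^2,S)$ correspond bijectively to harmonic $\phi\in\ZZ_k^\G$ whose image generates $\ZZ_k$; if one instead reads ``order $k$'' as $k$-torsion (order dividing $k$), the primitivity requirement is simply dropped and one recovers the bijection with the full set $\{\phi\in\ZZ_k^\G:\Delta\phi=0\}$ established in the previous paragraph.
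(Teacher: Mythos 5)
Your proposal is correct and takes essentially the same route as the paper: the paper's one-line proof is exactly your map $g\mapsto kg$ through the isomorphism $C(\ZZ^2,S)\cong\mathcal H_{S^1}(\G)$ of Theorem~\ref{thm_1}, identifying the $k$-torsion subgroup $\tfrac{1}{k}\ZZ/\ZZ$ of $S^1$ with $\ZZ_k$ pointwise. Your careful bookkeeping of the exact order is in fact a needed repair rather than excess caution: as literally stated the lemma's bijection fails (e.g.\ $\phi\equiv 0$, or any harmonic $\phi$ whose values generate a proper subgroup $d\,\ZZ_k$, corresponds to an element of order strictly dividing $k$), and your primitivity condition --- equivalently, reading ``order $k$'' as order dividing $k$ --- is precisely the correction that the paper's proof, which only exhibits the forward map and omits both the inverse and the order analysis, silently glosses over.
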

\begin{proof}Since $kg=0$ then,  considering $g$ as a function $\G\to\RR$ we conclude that $kg(z)\in\ZZ$ for each $z\in\G$ and so we may define define $\phi(z) = k\cdot g(z)\in\ZZ_k$ which is clearly harmonic.
\end{proof}

\begin{remark}
\label{rem_torsion}
One can reformulate the above lemma as that the set $C(\ZZ^2,S)_k$ of the elements of $C(\ZZ^2,S)$ of order $k>1$ can be defined by the following short exact sequence:

\begin{center}
 \begin{tikzcd} 
0  \arrow[r]  & C(\ZZ^2,S)_k  \arrow[r] & \ZZ_k^{\G} \arrow[r, "\Delta"] & \ZZ_k^{\G}.\\
 \end{tikzcd}
\end{center}

\end{remark}

Let us see how it works for the ray graph $\G$. Each torsion subgroup of $S^1=\mathcal H_{S^1}(\G)$ is generated by $1/k, k\in \ZZ_{>1}$. For $a_1=1/k$, in the above notation, instead of a sequence $a_n \pmod 1$ we consider a sequence $$b_n\in \ZZ_k, b_n=k\cdot a_n \pmod k$$ with $b_0=0,b_1=1, b_{n+1} = 4b_n - b_{n-1}\in \ZZ_k$. Note that any such sequence is periodic, $b_N=0, b_{N+1}=1$ for a certain number $N$. So the corresponding sandpile state will be periodic with period $N$. However it will not be purely periodic, there can be a small initial part that is not included in the period (as in the Creutz identity), see the examples below.

Let $k=2$. Then, using \eqref{eq_2} we compute  the sequence $a_n$ and obtain $0,1/2, 0, 1/2,\dots$, then, by \eqref{eq_3} we see that $\phi(n)=-2$ if $n$ is odd and  $\phi(n)=1$ if $n$ is even. So $\gamma(n)=7, 7, 4, 7, 4, \dots$ and $\gamma^\circ= 1,3,0,3,0,3,0,\dots$ is the (unique) element of order two in $C(\ZZ^2,S)$.

Similarly, we compute the elements of order three:

$$2 1 2 3 3 2 1 1 2 3 3 2 1 1 2 3 3 \dots$$

$$1 0 3 1 1 2 3 3 2 1 1 2 3 3 2 1 1\dots $$

\begin{remark}
For $\G' = \{(i,0)\in\ZZ^2, i\in\ZZ\}, S=\ZZ^2\setminus \G'$ we obtain $C(\ZZ^2,S) = S^1\times S^1$ (we can choose $a_0,a_1$ arbitrarily, and this choice defines all the other elements $a_n, n\in\ZZ$),  the identity element is

$$\dots 2 2 2 2 2 2 2 2 \dots$$

The elements of order two are

$$\dots 3 0 3 0 3 \dots \text{\ and \ } \dots 0 3 0 3 0 \dots$$

For $\G'$ the elements of finite order are exactly periodic recurrent sandpile states.
\end{remark}

\begin{remark}
Note that in \cite{shirai2016solvable} the same $C(\ZZ^2, S)$ ($G'$ from the previous remark) appeared as the appropriate algebraic dynamic system to describe the sandpile infinite volume limit on a ladder graph $\ZZ\times\{1,2\}$. So one is tempted to consider intermediate sandpile models on $\ZZ\times\{1,2\}$ where the set $S$ of sinks is given by $S=n\ZZ\times \{2\}$. 
\end{remark}

\section{Example: no torsion}

Let us construct an example of a subset $S\subset \ZZ^2$ such that $C(\ZZ^2, S)$  is non-trivial and contains no elements of finite order (except the neutral element, of course). 

Consider the ``ray'' graph from the previous section. Recall that for each $k\in\ZZ_{\geq 2}$ there exists a number $d(k)$ such that for a harmonic function in $b:\ZZ_{>0}\to \ZZ_k$, with $b_0=0,b_1=1$, we have that $b_{d(k)}=0$ and if $b_1\ne 0$ then $b_m=0$ if and only if $d(k)| m$. On the other hand, $b_{d(k)-1}$ determines $b_1$, and if $b_{d(k)-1}=0$ then $b_1=0$.

The graph $G$ will be a line with attached intervals, i.e. $$\G=\{(i,0),i\in \ZZ_{\geq 1}\}\bigcup_{j=1}^\infty R(k_j,K_j)$$ where $R(k_j,K_j)=\{(k_j,K)\in\ZZ^2| 0\leq K\leq K_j\}$ the numbers $k_j,K_j$ will be chosen to kill all the torsions. Let $S=\ZZ^2\setminus \G$. 

Let us construct a graph with no nontrivial harmonic functions $b\in\ZZ_k^\G$. Let all $K_j=d(k)$ and $k_1=d(k)-1, k_{i+1}=k_i+(d(k)-1)$. Take a harmonic function $b\in \ZZ_k^\G$. 

Note that each element $b: G\to \ZZ_k$ of order $k$ in $C(\ZZ^2,S)$ is completely determined by $b(1,0)$ and all $b(k_j,K_j), j\in\ZZ_{\geq 1}$. Note that by the definition of $d(k)$ we have that $b(k_j,0)=0$ for all $j\in\ZZ_{\geq 1}$. Since $k_1=d(k)-1$, this implies that $b(1,0)$ also must be zero. Then, $b(k_2,0)=0$ implies that $b(k_1+1,0)=0$ and so $b(k_1,1)=0$ and therefore $b(k_1,K_1)=0$. Similarly, $b(k_j,K_j)=0$ for all $j\geq 2$, so $b\equiv 0$.

Adapting this argument, we will kill all the torsions of $C(\ZZ^2, S)$. Let $p_1=2,p_2=3,p_3=5,\dots$ the sequence of all prime natural numbers. Let $k_1=3, K_1=2$. This choice assures that there is no $2$-torsion in the part $x\leq k_1$ of $\G$. 

We proceed by induction and define $K_{j+1}=K_j\cdot d(p_{j+1})$ and $k_{j+1}=k_j+K_{j+1}\pm1$. The choice of sign is made in the following way. For all $p_1,p_2\dots, p_j$ we already know that on $G_{\leq k_{j+1}}=\{(x,y)\in \G | x\leq k_{j+1}\}$ there exists no nontrivial harmonic functions with values in $\ZZ_{p_j}$. 

A harmonic function with values in $\ZZ_{p_{j+1}}$ on $G_{\leq k_{j+1}}$ is uniquely determined by $b(1,0)\pmod {p_{j+1}}$ and all those $b(k_{i+i'},K_{i+i'})$ where $K_i|d(p_{j+1}) \pmod {p_{j+1}}$ and $i'\geq 0$. Since $b(k_{i+i'},0)=0\pmod {p_{j+1}},i'\geq 0$ we see that $b(k_{i}+x,0)=0 \pmod {p_{j+1}}$ for $x\geq 0$. Finally, by choosing plus or minus in $k_{j+1}=k_j+K_{j+1}\pm1$ we may assure that $b(1,0)=0\pmod {p_{j+1}}$.  So we constructed $S$ and proved the following lemma.

\begin{lemma} The group $C(\ZZ^2,S)$ is $S^1/{\sim}$ with new relations $1/2\sim 0,1/3\sim 0,\dots, 1/n\sim0,\dots$.
\end{lemma}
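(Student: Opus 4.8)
The plan is to show that $C(\ZZ^2,S)\cong S^1/{\sim}$ where the equivalence relation imposes $1/n\sim 0$ for every $n\geq 2$; equivalently, the group is $S^1$ with all its torsion collapsed. By Theorem~\ref{thm_1} we have $C(\ZZ^2,S)\cong \mathcal H_{S^1}(\G)$, so I would work entirely on the side of $S^1$-valued harmonic functions. The first step is to identify the underlying abelian group of $\mathcal H_{S^1}(\G)$ by comparing it with the bare ray graph of Section~4, where $C(\ZZ^2,S)=S^1$ is parametrized by the single free choice $a_1\in S^1$ (all other values being determined by the recursion \eqref{eq_2}). The attached intervals $R(k_j,K_j)$ introduce additional vertices, but since each interval is a finite dead-end hanging off the main line, a harmonic function's value at the base $(k_j,0)$ together with the harmonicity condition along the interval forces the values on the interval; the only genuinely free parameter remains $a_1$. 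Thus as a set $\mathcal H_{S^1}(\G)$ is still parametrized by $a_1\in S^1$, giving a surjection $S^1\twoheadrightarrow C(\ZZ^2,S)$.

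The heart of the argument is to compute the kernel of this surjection, and this is exactly what the torsion-killing construction accomplishes. I would invoke the lemma established earlier in this section: an element of order $k$ in $C(\ZZ^2,S)$ corresponds to a nontrivial harmonic function $b\in\ZZ_k^\G$. The preceding paragraphs already prove, by the inductive choice $K_{j+1}=K_j\cdot d(p_{j+1})$ and $k_{j+1}=k_j+K_{j+1}\pm1$, that for each prime $p_{j+1}$ there is no nontrivial $\ZZ_{p_{j+1}}$-valued harmonic function on $G_{\leq k_{j+1}}$, hence none on all of $\G$. Since every finite cyclic group $\ZZ_k$ maps onto $\ZZ_p$ for some prime $p\mid k$, the absence of $p$-torsion for all primes $p$ rules out every nontrivial element of finite order. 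Translating back: the subgroup of $S^1$ that maps to $0$ in $C(\ZZ^2,S)$ contains every element $1/n$, and conversely an element $a_1=r$ maps to $0$ precisely when the induced sequence is globally trivial, which (because the forcing propagates only through the torsion constraints) happens exactly for the torsion points of $S^1$. This yields $C(\ZZ^2,S)\cong S^1/(\QQ/\ZZ)$, i.e. $S^1$ with the relations $1/n\sim 0$ imposed.

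The main obstacle I anticipate is verifying that the surjection $S^1\to C(\ZZ^2,S)$ really has kernel \emph{exactly} the torsion subgroup $\QQ/\ZZ\subset S^1$, and no more. The inductive construction is designed so that each prime-power torsion class dies, but one must confirm that an \emph{irrational} $a_1$ never gets killed, i.e. that the only relations forced are the intended ones and the attached intervals do not accidentally collapse additional (non-torsion) elements. For this I would argue that if $a_1$ has infinite order then for no $k$ is $k\cdot a_1\in\ZZ$, so the reduction $b=k\cdot a_1 \pmod k$ is never a well-defined integer sequence and no finite-order relation can be triggered; moreover the explicit solution $a_n=c((2+\sqrt3)^n-(2-\sqrt3)^n)$ from Section~4 shows the sequence grows without collapsing, so distinct irrational values of $a_1$ remain distinct modulo the torsion relations. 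Combining injectivity on the non-torsion part with the death of all torsion gives precisely $C(\ZZ^2,S)=S^1/{\sim}$ with $1/n\sim 0$ for all $n$, completing the proof.
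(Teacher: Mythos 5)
Your argument breaks at its first step: the claim that, because each attached interval $R(k_j,K_j)$ is a finite dead end, harmonicity forces its values from the base value $\phi(k_j,0)$, so that $a_1$ remains the only free parameter. That is true over $\RR$ but false over $S^1$, and the difference is the whole content of this example. Solving the interval from its top downwards, if $t=\phi(k_j,K_j)$ then $\phi(k_j,K_j-d)=b_{d+1}\,t$ with $b_1=1$, $b_2=4$, $b_{d+1}=4b_d-b_{d-1}$, so the base condition reads $b_{K_j+1}\,t=\phi(k_j,0)$ in $S^1$. Multiplication by the integer $b_{K_j+1}$ is surjective but \emph{not injective} on $S^1$: its kernel is cyclic of order $b_{K_j+1}$. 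So each top value is a genuine extra parameter, determined by the base only up to torsion; in particular the fiber of the evaluation map $\mathcal H_{S^1}(\G)\to S^1$, $\phi\mapsto a_1$, over $a_1=0$ is nontrivial (take $a_1=0$, any nonzero $t_1$ with $b_{K_1+1}t_1=0$, and extend across later intervals using divisibility of $S^1$). This is exactly why the paper says a harmonic function is determined by $b(1,0)$ \emph{together with all the top values} $b(k_j,K_j)$, and why the inductive choices $K_{j+1}=K_j\cdot d(p_{j+1})$ and $k_{j+1}=k_j+K_{j+1}\pm1$ are made: the paper's proof consists precisely in propagating vanishing through these extra parameters for $\ZZ_p$-valued functions (the arithmetic of $d(p_{j+1})$ forces the base values $b(k_i,0)$ to vanish, the $\pm1$ offsets force $b(1,0)=0$, and then the tops vanish in turn). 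Your proposal cites this construction only to kill torsion while assuming away the parametrization problem that the construction exists to solve.

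Consequently the second half does not go through as written either. There is no canonical surjection $S^1\to C(\ZZ^2,S)$ as you assert, since extending a given $a_1$ to a harmonic function requires the non-canonical choices $t_j$; the natural homomorphism goes the other way, $\mathcal H_{S^1}(\G)\to S^1$, and is onto, so a priori $C(\ZZ^2,S)$ is an \emph{extension} of $S^1$ by the kernel just described, not a quotient of $S^1$. Your injectivity argument (``an irrational $a_1$ never gets killed'') also does not address the kernel elements whose values are rational with unbounded denominators: these have \emph{infinite} order, so the absence of finite-order elements -- the earlier bijection lemma plus the prime-by-prime construction, which you do cite correctly -- says nothing about them. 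To reach the paper's conclusion one must control this kernel, or else argue at the level of abstract group structure (torsion-freeness together with divisibility and cardinality) to identify the result with $S^1/{\sim}\,\cong\RR$. As it stands, the step ``as a set $\mathcal H_{S^1}(\G)$ is parametrized by $a_1\in S^1$'' is a genuine gap, not a harmless simplification.
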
 
This group is not trivial and is isomorphic to $\RR$ as an abelian group.

\section{Example: all torsions}

Fix $m,n\in \ZZ_{\geq 2}$. Let $S_{m,n}=\{(mi,nj), i,j\in\ZZ\}, \G_{m,n}=\ZZ^2\setminus S_{n,m}$. Note that $S_{n,m}$ is a $C$-net for $C=m+n$. 

\begin{remark}
One can directly see that $C(\ZZ^2,S_{2,2})$ contains $\ZZ_n$ for each natural $n$. Moreover, the corresponding $\ZZ_n$-valued harmonic function $\phi$ on $\G_{2,2}$ may be made periodic as $\phi(i,j)=\phi(i,j+2)$. 
\end{remark}

\begin{lemma} 
Fix any $m,n\in \ZZ_{\geq 2}$. Then $C(\ZZ^2,S_{n,m})$ contains $\ZZ_p$ for all primes $p$. 
\end{lemma}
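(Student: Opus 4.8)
The plan is to use the correspondence from the lemma above, between elements of order $k$ in $C(\ZZ^2,S)$ and nonzero $\ZZ_k$-valued harmonic functions on $\G$ vanishing on $S$. Fixing a prime $p$, it therefore suffices to produce a single nonzero $\phi:\ZZ^2\to\ZZ_p$ with $\phi|_S=0$ and $\sum_{z'\sim z}\phi(z')=4\phi(z)\pmod p$ at every $z\in\G$: being $\ZZ_p$-valued it is automatically bounded, and being nonzero with $p\phi=0$ it has order exactly $p$, so $\ZZ_p\cong\langle\phi\rangle\subseteq C(\ZZ^2,S)$.

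I would look for such a $\phi$ that is periodic, with some period $(am,bn)$, turning the problem into a finite one: these are exactly the kernel, over $\ZZ_p$, of the reduced Laplacian $\Delta'$ of the quotient torus $\ZZ_{am}\times\ZZ_{bn}$, whose sinks form the image $\bar S$ of the sublattice $S$. It is harmless to take $am\ge 3$ and $bn\ge 3$, which makes the covering $\ZZ^2\to\ZZ_{am}\times\ZZ_{bn}$ a local graph isomorphism, so that a torus solution pulls back to a genuine harmonic function on $\G$ vanishing on $S$. To analyze $\ker\Delta'$ I would set up a transfer matrix $M$ in the $x$-direction: fixing the $y$-period $bn$, a harmonic function is a sequence of columns $v_x\in\ZZ_p^{bn}$ obeying, at each free column $x\notin m\ZZ$, the reversible recursion $v_{x+1}=(4I-A)v_x-v_{x-1}$ (with $A$ the cyclic adjacency on $\ZZ_{bn}$), while each sink column $x\in m\ZZ$ imposes the $b$ conditions $v_x(\bar S)=0$ and leaves the $b$ outgoing values $v_{x+1}(\bar S)$ free. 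Composing over one period of length $m$ gives a map $M$ on Cauchy data $(v_{x-1},v_x)$, and an $(am)$-periodic harmonic function is precisely a fixed point of $M^{a}$.

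The key point is that $M$ is an invertible linear map of a nonzero $\ZZ_p$-vector space. The free-column step is reversible, and preserves the discrete Wronskian form $\langle v_x,w_{x-1}\rangle-\langle v_{x-1},w_x\rangle$; the sink column, which trades the $b$ constraints $v_x(\bar S)=0$ for the $b$ free values $v_{x+1}(\bar S)$, is a symplectic reduction, dropping the dimension by $2b$ and leaving an invertible (symplectic) transfer $M$ on a reduced phase space of dimension $2b(n-1)$, which is positive since $n\ge 2$. Since $M$ lies in the finite group of invertible $\ZZ_p$-matrices, some power satisfies $M^{N}=\mathrm{Id}$; taking $a$ to be $N$ (or a large multiple, so that $am\ge3$) makes every Cauchy datum a fixed point of $M^{a}$, whence $\dim_{\ZZ_p}\ker\Delta'=2b(n-1)>0$. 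Any nonzero element gives the desired $\phi$, and pulling back finishes the proof. The main obstacle I anticipate is exactly the bookkeeping at the sink columns: verifying that the constraint-and-freeing is a symplectic reduction, so that $M$ is a well-defined, invertible, finite-order map on a space of the honestly positive dimension $2b(n-1)$. (Equivalently, a Fourier decomposition in $x$ writes $\det\Delta'=\pm\,\mathrm{Res}(z^{a}-1,\widehat D(z))$ for the Bloch determinant $\widehat D$ of one block; the transfer-matrix argument above is precisely what forces this integer to be divisible by $p$ for a suitable $a$, uniformly in $p$.)
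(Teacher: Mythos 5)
Your proposal is correct in substance and shares the paper's basic strategy --- produce a nonzero periodic $\ZZ_p$-valued harmonic function by transfer along the $x$-direction, with each sink column contributing $b$ vanishing constraints and $b$ free outgoing values --- but the mechanism by which you close the argument is genuinely different, and stronger. The paper works on the cylinder $W_n$ (periodic in $y$ only), propagates the column data $v_k$ by hand-tuning the free value $x_k$ at each sink column so that $\phi(km,0)=0$, and then invokes pigeonhole ($v_k=v_{k'}$ for some $k<k'$) to splice out an $x$-periodic function; as written, that proof is a sketch (its first line promises the construction only ``for all primes $p$ big enough,'' and the case analysis around $\phi(m,0)=0$, $\phi(2m,0)=0$ is left unfinished). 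You instead pass to a genuine torus quotient and tame the sink columns structurally: the constraint subspace $\{v_x|_{\bar S}=0\}$ is coisotropic for the Wronskian form preserved by the free steps, its characteristic directions are exactly the $b$-dimensional ambiguities, so the reduced transfer $M$ is a well-defined invertible linear map on a space of dimension $2b(n-1)\ge 2$; finiteness of $\mathrm{GL}_{2b(n-1)}(\ZZ_p)$ then gives $M^N=\mathrm{Id}$, making \emph{every} Cauchy datum periodic, uniformly in $p$. This buys a complete argument where the paper has a sketch, plus a quantitative count of period-$am$ solutions, at the price of the reduction bookkeeping you rightly flag (which does check out: the characteristic directions on the incoming side are changes of $v_{x-1}$ supported on $\bar S$, which do not affect the propagated free rows, so the map descends and is bijective). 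Two small touch-ups: a fixed point of $M^a$ in the \emph{reduced} space yields a torus-harmonic function only after adjusting the final free choice so that the Cauchy data $(v_{am},v_{am+1})=(v_0,v_1)$ agree on the nose (the reduced class pins $v_{am+1}$ down only modulo its values on $\bar S$, which is exactly that free choice); and the resulting correspondence gives $\dim_{\ZZ_p}\ker\Delta'\ge 2b(n-1)$ rather than equality, since functions with vanishing reduced class need not vanish --- positivity is all you need. Note also that invertibility of $M$ follows directly from reversibility of the recursion, so possible degeneracy of the bilinear form (e.g.\ at $p=2$) never enters.
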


We will construct a periodic harmonic function on $\G_{m,n}$ in $\ZZ_p$ for all primes $p$ big enough. To do that we construct a harmonic function $\phi$ on the infinite cylinder $W_n=(\ZZ^2, S)/{\sim}$ where we say that $(x,y) \sim (x,y+n)$, and then we lift such a harmonic function to $(\ZZ^2, S)$ in an obvious way. 

On $W_n$, assign any elements of $\ZZ_p$ to the vertices $$(0,i), 1\leq i\leq n-1, (-1,j), 1\leq j\leq n-1.$$ Denote this vector by $v_0\in V=(\ZZ_k)^{2n-2}$, $v$ represents  the values of $\phi$ at these vertices. Choose any value $x_0$ for $\phi(1,0)$. By harmonicity of $\phi$, the vector $v_0$ and $x_0$ uniquely define values of $\phi$ at vertices $(m,i), 0\leq i\leq n-1, (m-1,j), 0\leq j\leq n-1$, and we have one condition on $v_0$ and $x_0$ since $\phi(m,0)$ must be zero. Let us see what $\phi(m,0)$ would be if we start with $v_0=0,x_0=1$.  If it is not zero, we can choose any $v_0$, and then by tuning $x_0$, we can obtain $0$ for $\phi(m,0)$. Then we look at values $v_1$ composed of $\phi(m,i),\phi(m-1,i),1\leq i\leq n-1$ and then  by choosing an appropriate $x_1 = \phi(m+1,0)$ we can further extend $\phi$, etc. So, given $v_k$ composed of $\phi(km,i),\phi(km-1,i),1\leq i\leq n-1$ and by tuning $x_k=\phi(km+1,0)$ we can extend $\phi$ harmonically and determine $v_{k+1}$. Then we can choose two indices $k,k'$ such that $v_k=v_{k'}$ and repeat everything in between to get a harmonic function on the cylinder. 

If for $v_0=0$ and $x_0=1$ we have $\phi(m,0)=0$ then let us extend such $\phi$ further and see if $\phi(2m,0)$ is zero or not. If it is not zero, then we repeat the argument above, but for the fundamental domain twice bigger. If $\phi(2m,0)=0$ then we exten our $\phi$ further.

%This is a sequence A089165

\section{Pontryagin duality perspective} Given a group $A$ one can consider the group $\hat A$ of all continuous homomorphisms $S\to S^1$. Note that if $A=\ZZ^{\oplus \G}$ (the direct sum, i.e. the set of $\ZZ$-valued functions on the vertices of $\G$ with finite support) equipped with the discrete topology, then $\hat A=(S^1)^\G$ (the infinite product of $S^1$ with the product topology, so $\hat A$ is a compact group).   Applying the Pontryagin duality to the lower row of Figure~\ref{figd} we get 

\begin{center}
 \begin{tikzcd} 
0 & \arrow[l]  \widehat{\mathcal  H_{S^1}(\G)} & \arrow[l]  \ZZ^{\oplus \G} &\arrow[l, "\Delta" swap]    \ZZ^{\oplus \G}.
 \end{tikzcd}
\end{center}

 So we can identify $\widehat{\mathcal  H_{S^1}(\G)}$ with the factor group of the Laurent polynomials $\ZZ[x,y]$ by the ideal generated by $4-x-x^{-1}-y-y^{-1}$ (this is the group of addition operators, cf. \cite{schmidt2009abelian,redig}). 
 
 Note that there is a natural map 
 $$\widehat{\mathcal  H_{S^1}(\G)}= \widehat{C(\ZZ^2,S)}\to C(\ZZ^2,S),$$
 
 which can be read from the diagram below:
 
 \begin{figure}[h]
\begin{tikzcd}
            0 \arrow[r] & \ZZ^{\oplus\G} \arrow[r, "\Delta"] \arrow[d] & \ZZ^{\oplus \G} \arrow[r] \arrow[d] & \widehat{C(\ZZ^2,S)} \arrow[r]\arrow[d] & 0 \\
            0 \arrow[r] & \ZZ^\G_B \arrow[d] \arrow[r,"\Delta"]      & \ZZ^\G_B \arrow[r] \arrow[d] \arrow[r] & C(\ZZ^2,S) \arrow[r] & 0   \\
            0 \arrow[r] & {\ZZ^\G_B/\ZZ^{\oplus\G}} \arrow[r,"\Delta"]   & {\ZZ^\G_B}/{\ZZ^{\oplus\G}} & & \\
\end{tikzcd}
\caption{The map from the dual of the sandpile group to the sandpile group. }
\label{figd}
\end{figure}

\begin{theorem} The map $\widehat{C(\ZZ^2,S)}\to C(\ZZ^2,S)$ is injective.
\end{theorem}
It is enough to prove that if $\Delta f$ has a finite support for $f\in \ZZ^\G_B$ then $f$ has compact support. Indeed, very far from the support of $\Delta f$ the function $f$ is harmonic. So, suppose it attains a local maximum  $M$ (which is the case since $f$ is bounded) at a vertex $v$. In that case, $f$ must be equal to $M$ at the neighbors of $v$, the neighbors of the neighbors of $v$ etc, but eventually, this set of the vertices reaches $S$, thus giving a contradiction since $f_{|S}=0$.

From the properties on Pontryagin duality, we know, for example, that the (discrete)  group $\widehat{C(\ZZ^2, S)}$ being torsion-free is equivalent to the (compact) group $C(\ZZ^2, S)$ being connected. It is possible that $\widehat{C(\ZZ^2,S)}$ is torsion free but $C(\ZZ^2,S)$ has torsion.
 
From the lower row of Figure~\ref{figd} and Remark~\ref{rem_torsion} we see that the $k$-torsion of $C(\ZZ^2,S)$ is the kernel of the map $\Delta: \ZZ_k^\G\to \ZZ_k^\G$. This map is injective if and only if the dual map $\Delta:  \ZZ_k^{\oplus\G}\to \ZZ_k^{\oplus \G}$ (for the functions with the finite support) is surjective. We summarize these facts in the following theorem.

\begin{theorem}
The following facts are equivalent to each other:

\begin{itemize}
\item $C(\ZZ^2,S)$ has no $k$-torsion,
%\item $\widehat{C(\ZZ^2,S)}$ has no $k$-torsion,
\item $\Delta: \ZZ_k^\G\to \ZZ_k^\G$ is injective,
\item $\Delta: \ZZ_k^{\oplus \G}\to \ZZ_k^{\oplus\G}$ is surjective ($\ZZ_k^{\oplus \G}$ is the set of $\ZZ_k$-valued functions on $\G$ with finite support).
\end{itemize}

\end{theorem}

\section{Discussion and a large pile of sand}

Let us speculate how to get the Abelian group's structure on recurrent sandpile states on $\ZZ^2$. We start with the group $R' = \ZZ^\G_B/{\Delta \ZZ^\G_B}$ of bounded integer-valued functions on $\G=\ZZ^2$ modulo relations defined by the toppling operations.  A natural map exists from the set of all the states whose restriction to any finite subset of $\ZZ^2$ is a recurrent state to the group $R'$.

This map, however, is not injective: let $X\subset\ZZ^2$ be the set of points $(i,0), i\in \ZZ$. Consider the state $\phi$, which is equal to $2$ everywhere except $X$, where it is equal to $3$. Let $$f:\ZZ^2\to \ZZ, f|_X=1, f_{\ZZ^2\setminus X} = 0.$$ Then both states $\phi$ and $\psi=\phi+\Delta f$ have the property that the restriction of such a state to any finite subset of $\ZZ^2$ is recurrent. Note that $\psi$ contains no finite forbidden subconfiguration, but it contains an infinite forbidden subconfiguration on $X$. Indeed, the value of $\phi(v)=1$ at each vertex $v\in X$ is less than the internal (i.e., inside $X$) degree of $v$. So, we may say that we would like to consider only the set $R$ of states that do not contain neither finite nor infinite forbidden subconfiguration. 

We want to define a reasonable addition operation in $R$, which is made up of a pointwise addition followed by a relaxation. It seems inevitable that this operation should be based on the addition in $R'$. 

{\bf Harmonic $S^1$-valued functions and non-invertibility of $\Delta$.} The relation between $R'$ and the group of $S^1$-valued harmonic functions on $\ZZ^2$ is subtle because it is impossible to follow the reasoning in the proof of Theorem~\ref{thm_1}. Indeed, the map $\Delta: \RR_B^{\ZZ^2}\to \RR_B^{\ZZ^2}$ is not invertible. To the best of our knowledge, no good description (e.g., in the form of an explicit set of generators) of the group $\RR_B^{\ZZ^2}/{\Delta \RR_B^{\ZZ^2}}$ is available. Another strategy is possible: let $\sigma$ be the shift of $\ZZ^2$ by the vector $(1,0)$. Then it is known \cite{schmidt2009abelian} that for $\phi\in \ZZ_B^{\ZZ^2}$ the state $(\sigma-1)^3\phi$ belongs to the image of $\Delta$. Thus one can take $\phi\in R$, send it to  $(\sigma-1)^3\phi$, then lift in $\RR_B^{\ZZ^2}$ by $\Delta^{-1}$ and send it to $(S^1)^{\ZZ^2}$ using $\pmod 1$. This will produce an $S^1$-valued discrete harmonic function $\xi(\phi)$ on $\ZZ^2$. It is known  \cite{schmidt2009abelian} that $\xi$ is an equivariant, surjective, and entropy-preserving map. However, it is unknown whether $\xi$ is almost surely one-to-one.

{\bf Naive perspective: taking a quotient of $R'$.} While considering a sandpile state $\phi$ on an infinite graph, it is reasonable to look at the average number $A(\phi, X)$ of grains per vertex in large but finite parts $X$ of the graph. Note that $$A(\phi+\psi, X)= A(\phi, X)+A(\psi, X),$$ and the relaxation defined by a bounded toppling function does not change it, $A(\phi+\Delta F, X)\approx A(\phi, X)$ if $|F|<C$ is fixed and $X$ is big enough. Stable states $\phi$ satisfy $A(\phi,X)\leq 3$ for each $X$. The result of the pointwise addition of two such states may violate the latter condition, and a relaxation by a bounded toppling function may not help since any grain of sand cannot travel far away from its initial position.

A natural suggestion would be to factorize $R'$ by some element to correct the average number of grains. For example, we could declare that the state $\bf 1$, equal to one at each vertex of $\ZZ^2$, is doomed to be equivalent to the neutral element. This allows us to compute certain sums of elements, via reducing them to stable states by subtracting $\bf 1$. However, consider a state $\phi$ which is $2$ at vertices with positive ordinate and $3$ at vertices with non-positive ordinate. $\phi$ is recurrent but $4\cdot \phi$ cannot be relaxed even if we subtract $\bf 1$ up to three times. If we subtract $\bf 1$ four times, half of the vertices of $\ZZ^2$ have negative amount of grains, which will not change after a relaxation with a bounded toppling function.

Similarly, factorizing by an element $\phi\geq 0$ with an infinite support does not solve the problem that in a given state we can have a lot of sand in one part $X$ and no sand in another part $X'$, and these parts $X, X'$ are far from each other. Then, subtracting $\phi$, we can diminish the average amount of sand in $X$, which causes the average amount of sand in $X'$ to become negative. So, it would be better to factorize $R'$ by elements $\phi_i, i\in I$ with finite support. Then, this factorization does not help if there is no bound on the minimal distance between vertices of $\ZZ^2$ and $\phi_i, i\in I$ (otherwise we again need unbounded toppling functions).

Hence, the natural suggestion would be to factorize $R'$ by operations of adding a grain to a vertex of a certain $C$-net. A natural candidate is the set $S$ of points $(ni,nj), i,j\in \ZZ$ with a fixed $n$. Note that this is more than claiming $S$ to be the set of sinks, for adding $4$ grains to $(0,0)$ is the same as adding one grain to each neighbor of zero. That was the initial idea for this article. We proved above that if $S$ is a $C$-net in $\ZZ^2$ and we declare $S$ to be sinks, then a group $C(\ZZ^2, S)$ is defined. However, note that for $S\subset S'$ there is no natural homomorphism between $C(\ZZ^2, S)$ and $C(\ZZ^2, S')$ because adding new sinks does not respect the addition of recurrent states.
 
So, we concentrate on the above factorization idea. Define $\widetilde C(\ZZ^2,S)$  by considering the following diagram 

$$\ZZ_B^{\ZZ^2}\to \ZZ^{\ZZ^2\setminus S}_B \to \widetilde C(\ZZ^2,S)\to 0$$
where the second map is the composition of the discrete Laplacian and restriction.

Note that $ \widetilde C(\ZZ^2, S)$ is a factor group of $C(\ZZ^2, S)$, we factorize by elements $a_v$ which represent the addition of one grain to each neighbor of a given sink $v\in S$.  

{\bf The (elusive) sandpile group of $\ZZ^2$ as a projective limit.} Consider a nested system of infinite sets $S_0\supset S_1\supset S_2\supset \dots$ with $\bigcap S_i=0$. This system gives rise to an inverse system of $\widetilde C(\ZZ^2, S_i)$, so there exists a map from $R'$ to the projective limit of this system. 

{\bf Question.} What are the properties of the map $R'\to \varprojlim \widetilde C(\ZZ^2,S_i)$? Is it mono/epimorphism? Does it have any cohomological interpretation for a given nested family of $S_i$? Note that $$\varprojlim \ZZ^{\ZZ^2\setminus S}_B\ne \ZZ^{\ZZ^2}_B$$ since the element $\phi$ such that $\phi|_{\ZZ^2\setminus S_0}=0, \phi|_{S_{i+1}\setminus S_i} = i+1$ belongs to $\varprojlim \ZZ^{\ZZ^2\setminus S}_B$ but does not belong to $\ZZ^{\ZZ^2}_B$.

A direct limit of sandpile groups for certain finite subsets of $\ZZ^2$ is constructed in \cite{lang2019sandpile,lang2022sandpile}. Their idea is as follows: there exists a canonical monomorphism from the sandpile group of a rectangular $G_{m,n}$ with sides $m,n$ to the sandpile group of $G_{km+k-1,ln+l-1}$. Then we obtain a direct system of abelian groups by tiling $\ZZ^2$ by rectangular $G_{m,n}$. This can be contemplated on the level of $S^1$-valued harmonic functions on $G_{m,n}$. Given such a function, we copy (up to sign and reflection) it according to the tiling. On the sandpile level, this does not look very nice. In \cite{lang2019harmonic}, another projective limit is constructed, but again on the side of $S^1$-valued discrete harmonic functions, which cannot be translated to sandpile states since we cannot cross the ``non-invertibility of $\Delta$'' river. Contrary to their approach, our constructions is performed on the side of sandpile states.

{\bf Classical question: a big pile.} One of the classical problems in sandpile studies is to determine the shape of the boundary of the relaxation of a huge pile of sand at the origin \cite{PS, LPS,levine2013apollonian, levine2009strong, aleksanyan2019discrete, bou2020dynamic}. 

Let us see what happens if we put $1.2\cdot 10^7$ grains to the vertex $(3,3)$ on empty $\ZZ^2$ with periodic sinks at $(6i,6j),i,j\in\ZZ$. The boundary of the relaxed sandpile is close to a circle of radius $40$; see Figure~\ref{fig}.

The relaxation of  $10^{30}$ grains at $(3,3)$ contains an inner circle of radius $212.66$ and center $(3,3)$ and is contained in an outer circle of radius $216.56$. It seems the rescaled boundary of a relaxation of $n$ grain at one vertex on a plane with periodic sinks tends to a circle when $n\to\infty$, and that this may be proven more or less as in \cite{alevy2020limit} after averaging since after averaging a sandpile dynamics with periodic sinks looks like a sandpile with equally slightly dissipative vertices.

\begin{center}
\begin{figure}
\includegraphics[scale=0.4]{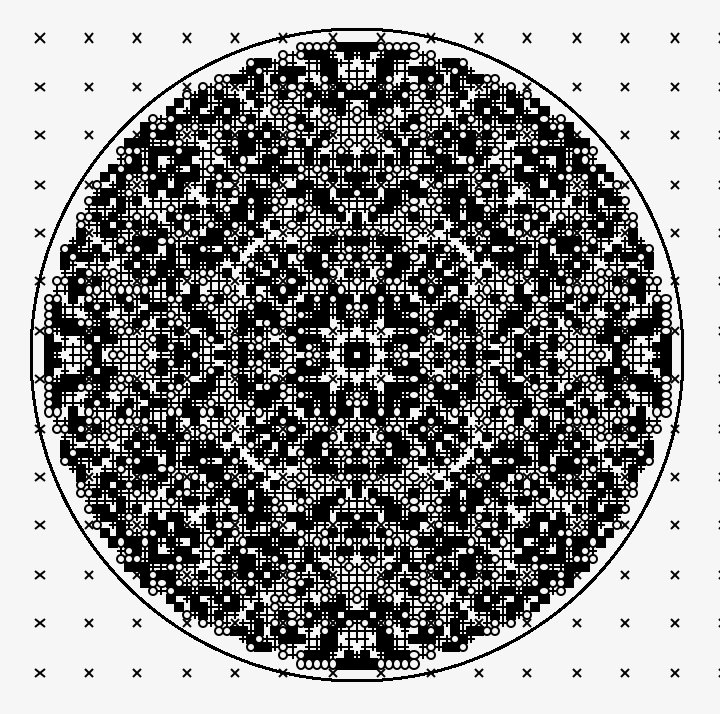}
\caption{The result of a relaxation of $1.2\cdot 10^7$ grains at $(3,3)$ on $\ZZ^2$ with sinks at $(6i,6j),i,j\in\ZZ$. Skew crosses are sinks; white represents zero grains, circles one grain, black squares two grains, and crosses three grains. A circle of radius $40$ with the center $(3,3)$ is drawn.}
\label{fig}
\end{figure}
\end{center}
%If we drop $10^{17}$ grains, then non zero is contained in a circle of radius 115 and contains 113.

%1000000000000000000
%120.60265337047937 122.71104269787622

%1000000000000000000000
%143.83671297690307 146.01369798755184 1.0151351137383007

%212.66405432042342 216.566387050253 1.018349752346722
%10^30

%\section{Discussion}
%
%Question: coverings? 

%\bibliography{../../bibliography}
%\bibliographystyle{abbrv}

\end{document}